\newtheorem{proposition}{Proposition}
\numberwithin{proposition}{section}
\newtheorem{lemma}[proposition]{Lemma}
\newtheorem{corollary}[proposition]{Corollary}
\newtheorem{definition}[proposition]{Definition}
\newtheorem*{definition*}{Definition}
\newtheorem{theorem}[proposition]{Theorem}
\newtheorem{claim}[proposition]{Claim}
\newtheorem{problem}[proposition]{Problem}
\newtheorem*{theorem*}{Theorem}
\theoremstyle{example}
\newtheorem*{example*}{Example}
\theoremstyle{remark}
\newtheorem{remark}[proposition]{Remark}
\newcommand{\C}{\mathbb{C}}
\newcommand{\GLm}{\operatorname{GL}_m}
\newcommand{\GLn}{\operatorname{GL}_n}
\newcommand{\GLk}{\operatorname{GL}_k}
\newcommand{\Sym}[1]{\mathfrak{S}_{#1}}
\newcommand{\mn}{m^n}
\newcommand{\nm}{n^m}
\DeclareMathOperator{\tr}{tr}
\DeclareMathOperator{\Tr}{Tr}
\DeclareMathOperator{\End}{End}
\author{Greta Panova}
\address{UPenn Mathematics Department, 
        209 South 33rd St,
        Philadelphia, PA 19104, USA}
\email{panova@math.upenn.edu}
\author{Piotr Śniady}
\address{
Institute of Mathematics, Polish Academy of Sciences,
\mbox{ul.~Śniadec\-kich 8,} \linebreak 00-956 Warszawa, Poland
} 
\email{psniady@impan.pl}
\title%
[Skew Howe duality and random rectangular Young tableaux]%
{Skew Howe duality \\and random rectangular Young tableaux}
\subjclass[2010]
{%
22E46,  
20C30,  
60C05   
}
\keywords{skew Howe duality, random Young diagrams, representations of general linear groups $\GLm$, 
representations of finite symmetric groups}
\begin{document}
\maketitle
\begin{abstract}
We consider the decomposition into irreducible components 
of the external power $\Lambda^p(\C^m\otimes \C^n)$ regarded as a
$\GLm\times\GLn$-module. 
Skew Howe duality implies that
the Young diagrams from each pair $(\lambda,\mu)$ 
which contributes to this decomposition turn out to be conjugate
to each other, i.e.~$\mu=\lambda'$. 
We show that the Young diagram $\lambda$ which corresponds to a randomly selected
irreducible component $(\lambda,\lambda')$ has the same distribution
as the Young diagram which consists of the boxes with entries $\leq p$
of a random Young tableau of rectangular shape with $m$ rows and $n$ columns.
This observation allows treatment of the asymptotic version of this decomposition
in the limit as $m,n,p\to\infty$ tend to infinity.
\end{abstract}

\section{Introduction}
\label{sec:problem}

\subsection{The problem}
In this note we address the following question.
\begin{quotation}
\emph{Consider 
\begin{equation}
\label{eq:decomposition}
 \Lambda^p(\C^m\otimes \C^n)=\bigoplus_{\lambda}S^{\lambda}\C^m\otimes S^{\lambda'}\C^n   
\end{equation}
as a $\GLm\times \GLn$-module. 
[\dots]
I would
like any information on the shapes of pairs of Young diagrams $(\lambda,\lambda')$ that
give the largest contribution to the dimension asymptotically.
[\dots] 
}

\hfill Joseph M.~Landsberg \cite{Landsberg2012}\footnote{The question of Landsberg is reproduced here in a slightly
redacted version. In particular, the original question considered only the special case $m=n$.}
\end{quotation}

\bigskip

Above, $S^{\lambda}\C^m$ denotes the Schur functor applied to $\C^m$ or, in other words, 
the irreducible representation of the general linear group $\GLm$ 
with the highest weight $\lambda$.
The sum in \eqref{eq:decomposition} runs over Young diagrams 
$\lambda\subseteq n^m$ with $p$ boxes, and such that
the number of rows of $\lambda$ is bounded from above by $m$, and the number of columns of $\lambda$
is bounded from above by~$n$.
The decomposition \eqref{eq:decomposition} is nowadays referred to as 
\emph{skew Howe $(\GLm\times\GLn)$-duality},  cf.~\cite[Theorem 4.1.1]{Howe}.
Even though \eqref{eq:decomposition}
provides full information about the decomposition into irreducible components, it is not very 
convenient for answering such asymptotic questions, see the introduction to the work of Biane \cite{Biane1998}
for discussion of difficulties related to similar problems.

Despite improvements in the understanding of asymptotic problems related to the representation theory
of the general linear groups $\GLm$ \cite{Biane1995,Bufetov2015,Collins2016},
we do not see generic tools which would be suitable for investigation of the external 
power \eqref{eq:decomposition}.

\subsection{Motivations: Geometric Complexity Theory}
Besides the natural interest in the question as a problem in asymptotic representation theory, 
this question is also relevant within Geometric Complexity Theory (GCT). 
The decomposition appears in the study of the complexity of matrix multiplication~\cite{Landsberg2015} 
and, in particular, 
in the study of the border rank of the matrix multiplication tensor as a standard measure of complexity. 
A lower bound for the border rank is obtained from the rank of a particular linear map, 
whose kernel can be decomposed as a $\operatorname{GL}(V)\times \operatorname{GL}(W)$ representation. 
The general approach in GCT would be to study the irreducible components for polynomials to play the role of 
``obstruction candidates'' and, 
depending on the precise setup, the multiplicities would show where to find the obstructions.

\subsection{The main result}
A partial answer to the question of Landsberg which we give in the current paper
is based on a simple result which transforms the original problem
into a question about the representation theory of the symmetric group $\Sym{p}$
for which more asymptotic tools are available, see \cref{sec:applications} below.

We state our main result in two equivalent versions which are of quite distinct flavors:
\begin{itemize}
   \item 
as \cref{theo:enum} which is conceptually simpler and is a purely enumerative statement
which relates some dimensions of the representations of the general linear groups $\GLk$
to the dimensions of some representations of the symmetric groups $\Sym{p}$, and
  \item 
as \cref{theo:main} which is a probabilistic statement which relates the distribution of 
a random irreducible component of the external power \eqref{eq:decomposition} 
to the distribution of a random irreducible component
of a certain representation of the symmetric group $\Sym{p}$.
This second formulation is more convenient for addressing Landsberg's problem.
\end{itemize}
The proof of \cref{theo:enum} is shorter, but the proof of \cref{theo:main}
might be advantageous for some readers who prefer more representation-theoretic
viewpoint.

\subsection{The main result: the enumerative version}
Let $m,n\geq 1$ be integers 
and let $\lambda\subseteq n^m$ be a Young diagram with $p$ boxes which has at most $m$ rows
and at most $n$ columns.
We denote by $\nm$ the rectangular Young diagram with $m$ rows and $n$ columns.
We denote by $f^{\lambda}$ the dimension of the irreducible representation of $\Sym{p}$ 
corresponding to the Young diagram $\lambda$. Note that
the skew Young diagram $\nm / \lambda$ is a rotation by $180\degree$
of a certain Young diagram therefore 
it defines an \emph{irreducible} representation of $\Sym{p}$.

\begin{theorem}
\label{theo:enum}
For a Young diagram $\lambda\subseteq n^m$ with $p$ boxes
we have the following relationship between dimensions of representations
of $\GLm$, $\GLn$ and $\Sym{p}$:
\begin{equation}
\label{eq:main-identity}
 \frac{ \dim(S^{\lambda} \C^m) \dim(S^{\lambda'}\C^n) }{ \dim \Lambda^p (\C^m \otimes \C^n )}
=
\frac{ f^\lambda f^{n^m / \lambda} }{ f^{n^m} }.
\end{equation}
\end{theorem}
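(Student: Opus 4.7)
The plan is to reduce \eqref{eq:main-identity} to a clean combinatorial identity about hook lengths and contents, and then establish that identity by switching to Frobenius's beta-number coordinates.

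I would first rewrite both sides using standard dimension formulas. For the left-hand side, the hook-content formula gives $\dim S^\lambda\C^m = \prod_{(i,j)\in\lambda}(m+j-i)/h_\lambda(i,j)$, and the bijection $(i,j)\leftrightarrow(j,i)$ between cells of $\lambda$ and $\lambda'$, combined with $h_{\lambda'}(j,i)=h_\lambda(i,j)$, yields $\dim S^{\lambda'}\C^n = \prod_{(i,j)\in\lambda}(n+i-j)/h_\lambda(i,j)$. Using $\dim\Lambda^p(\C^m\otimes\C^n)=\binom{mn}{p}$, the left-hand side becomes $\binom{mn}{p}^{-1}\cdot\prod_{(i,j)\in\lambda}(m+j-i)(n+i-j)/H(\lambda)^2$, where $H(\mu):=\prod_{(i,j)\in\mu}h_\mu(i,j)$. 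On the right-hand side, the $180\degree$-rotation of $\nm/\lambda$ is the Young diagram $\lambda^{\mathsf{c}}$ with $\lambda^{\mathsf{c}}_i = n-\lambda_{m+1-i}$, so that $f^{\nm/\lambda}=f^{\lambda^{\mathsf{c}}}$; the hook length formula $f^\mu=|\mu|!/H(\mu)$ applied to $f^\lambda$, $f^{\lambda^{\mathsf{c}}}$, and $f^{\nm}$ collapses the right-hand side to $\binom{mn}{p}^{-1}\cdot H(\nm)/(H(\lambda)H(\lambda^{\mathsf{c}}))$. Hence \eqref{eq:main-identity} is equivalent to the combinatorial identity
\begin{equation}
\label{eq:hooklength-identity}
H(\lambda^{\mathsf{c}})\cdot\prod_{(i,j)\in\lambda}(m+j-i)(n+i-j) = H(\lambda)\cdot H(\nm).
\end{equation}

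To prove \eqref{eq:hooklength-identity}, I would switch to the Frobenius beta-coordinates $l_i := \lambda_i + m - i$ for $i=1,\dots,m$ and $r_j := \lambda'_j + n - j$ for $j=1,\dots,n$. Standard computations give
\[
H(\lambda) = \frac{\prod_i l_i!}{V(l)}, \qquad \prod_{(i,j)\in\lambda}(m+j-i) = \frac{\prod_i l_i!}{\prod_{k=0}^{m-1}k!}, \qquad \prod_{(i,j)\in\lambda}(n+i-j) = \frac{\prod_j r_j!}{\prod_{k=0}^{n-1}k!},
\]
with $V(l):=\prod_{i<k}(l_i-l_k)$. Since $l^{\mathsf{c}}_i = (m+n-1)-l_{m+1-i}$, one has $V(l^{\mathsf{c}})=V(l)$ and $H(\lambda^{\mathsf{c}}) = \prod_i((m+n-1)-l_i)!/V(l)$. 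The essential input is the classical Maya-diagram complementarity
\[
\{l_i\}_{i=1}^m \sqcup \{(m+n-1)-r_j\}_{j=1}^n = \{0,1,\ldots,m+n-1\},
\]
which implies $\prod_i((m+n-1)-l_i)!\cdot\prod_j r_j! = \prod_{k=0}^{m+n-1}k!$. Substituting all of this into \eqref{eq:hooklength-identity}, the factors $\prod_i l_i!$ and $V(l)$ cancel and both sides reduce to the $\lambda$-independent quantity $\prod_{k=0}^{m+n-1}k!/(\prod_{k=0}^{m-1}k!\cdot\prod_{k=0}^{n-1}k!)$, which is precisely $H(\nm)$ since $h_{\nm}(i,j)=m+n-i-j+1$.

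The only real obstacle is the bookkeeping that reduces the dimensional ratio to \eqref{eq:hooklength-identity}; once the beta-number coordinates are in place, the Maya-diagram complementarity makes both sides drop out with no residue.
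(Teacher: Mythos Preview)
Your argument is correct. Both you and the paper reduce the identity to the same core statement---after the hook-length and hook-content formulas and $\dim\Lambda^p(\C^m\otimes\C^n)=\binom{mn}{p}$, everything comes down to
\[
\dim(S^\lambda\C^m)\,\dim(S^{\lambda'}\C^n)=\frac{H_{n^m}}{H_\lambda\,H_{\bar\lambda}},
\]
equivalently your hook-length identity. The difference is in how this is established. The paper argues geometrically: it first proves $s_{\lambda'}(1^n)=s_{\overline{\lambda'}}(1^n)$ via a complementary-SSYT bijection inside the $m^n$ box, then observes that the cells of $\lambda$ and of $\overline{\lambda'}$ tile the rectangle $n^m$ with the content relation $n+c_{\overline{\lambda'}}(\Box')=m+c_{n^m}(\Box)$, so the two hook-content numerators merge into $\prod_{\Box\in n^m}(m+c(\Box))$, which equals $H_{n^m}$ because $s_{n^m}(1^m)=1$. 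You instead pass to the beta-numbers $l_i=\lambda_i+m-i$, $r_j=\lambda'_j+n-j$ and invoke the Maya-diagram complementarity $\{l_i\}\sqcup\{(m{+}n{-}1)-r_j\}=\{0,\dots,m{+}n{-}1\}$ to collapse all the factorials to the superfactorial ratio $\prod_{k=0}^{m+n-1}k!\big/\bigl(\prod_{k=0}^{m-1}k!\prod_{k=0}^{n-1}k!\bigr)=H_{n^m}$.

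These are two encodings of the same ``$\lambda$ and its rotated complement tile the rectangle'' phenomenon: the paper works at the level of cells and contents, you work at the level of the boundary profile. The paper's route is shorter and more visual, and it isolates a reusable lemma ($s_\nu(1^n)=s_{\bar\nu}(1^n)$) with both bijective and representation-theoretic proofs. Your route is more mechanical but entirely self-contained---no auxiliary SSYT bijection, no separate appeal to $s_{n^m}(1^m)=1$---and it makes transparent exactly which factorial identity is doing the work.
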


Our proof of this result (see \cref{sec:enum-proof}) 
will be based on al\-ge\-bra\-ic com\-bi\-na\-to\-rial manipulations with the
hook-length formula and the hook-content formula.

\subsection{Bijective proofs?}

\cref{theo:enum} implies the following result.
\begin{claim}
\label{claim:fraction}
For all integers $n,m\geq 1$ and $0\leq p\leq nm$ the fraction
\[  
C_{n,m,p}:=\frac{ f^\lambda f^{n^m / \lambda} }{  \dim(S^{\lambda} \C^m) \dim(S^{\lambda'}\C^n) }\]
is a constant which does not depend on the choice of a Young diagram $\lambda\subseteq n^m$ with $p$ boxes.
\end{claim}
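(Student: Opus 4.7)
The plan is to derive this claim as an immediate algebraic consequence of \cref{theo:enum}, which has already been stated (and whose proof is promised in \cref{sec:enum-proof}). There is essentially no obstacle here: once \eqref{eq:main-identity} is in hand, the claim is a one-line rearrangement.

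More concretely, I would start from \eqref{eq:main-identity} and cross-multiply to isolate the quantity $C_{n,m,p}$ in the definition of the claim. Doing so yields
\[
\frac{f^\lambda\, f^{n^m/\lambda}}{\dim(S^{\lambda}\C^m)\,\dim(S^{\lambda'}\C^n)}
=\frac{f^{n^m}}{\dim\Lambda^p(\C^m\otimes\C^n)}.
\]
The right-hand side depends only on the parameters $n$, $m$, $p$ (note that $f^{n^m}$ is the number of standard Young tableaux of the fixed rectangular shape $n^m$, and the dimension of the exterior power is simply $\binom{mn}{p}$); in particular it does not involve $\lambda$. This proves the claim and, as a byproduct, gives the explicit value $C_{n,m,p}=f^{n^m}/\binom{mn}{p}$.

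The only step that requires genuine work is \cref{theo:enum} itself, which the authors defer; so from the point of view of proving \cref{claim:fraction} in isolation, there is no real obstacle beyond correctly identifying that both sides of \eqref{eq:main-identity} share the $\lambda$-dependence and that the $\lambda$-independent factor is exactly $C_{n,m,p}$.
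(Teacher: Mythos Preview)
Your proposal is correct and matches the paper's approach exactly: the paper simply asserts that \cref{theo:enum} implies \cref{claim:fraction}, and your one-line rearrangement of \eqref{eq:main-identity} makes this implication explicit, including the identification $C_{n,m,p}=f^{n^m}/\binom{mn}{p}$.
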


Conversely, \cref{claim:fraction} implies \cref{theo:enum} since 
\[  
C_{n,m,p} \sum_{\lambda\vdash p} \dim(S^{\lambda} \C^m) \dim(S^{\lambda'}\C^n) := \sum_{\lambda\vdash p} f^\lambda f^{n^m / \lambda} \]
implies
\[  
C_{n,m,p}\  \Lambda^p(\C^m\otimes \C^n) := f^{n^m} \]
(the left-hand side is an application of skew Howe duality \eqref{eq:decomposition})
which determines uniquely the constant $C_{n,m,p}$.

\medskip

This observation opens the following challenging problem.
\begin{problem}
For a pair of Young diagrams $\lambda,\mu\subseteq n^m$, each with $p$ boxes,
find a \emph{bijective} proof of the identity
\begin{multline}
\label{eq:bijective-problem1}
 \dim(S^{\lambda} \C^m)\ \dim(S^{\lambda'}\C^n) \ f^{\mu} f^{n^m / \mu} 
=  \\ \dim(S^{\mu}\C^m)\ \dim (S^{\mu'}\C^n)\
 f^\lambda\ f^{n^m / \lambda} 
\end{multline}
which is clearly equivalent to \cref{claim:fraction} and thus to \cref{theo:enum}.
\end{problem}

In fact, it would be enough to find such a bijection in the special case when $\mu$ is obtained from $\lambda$
by a removal and an addition of a single box.

\subsection{The main result: the probabilistic version}

\begin{theorem}
\label{theo:main}
Let $m,n\geq 1$ and $0\leq p\leq mn$ be integer numbers. 

The random irreducible component of \eqref{eq:decomposition}
corresponds to a pair of Young diagrams $(\lambda,\lambda')$,
where $\lambda$ has the same distribution as
the Young diagram which consists of the boxes with entries $\leq p$ 
of a uniformly random Young tableau with rectangular shape $n^m$ 
with $m$~rows and $n$ columns.

Alternatively: the random Young diagram $\lambda$ has the same distribution
as a Young diagram which corresponds to a random irreducible component of
the restriction $V^{n^m}\big\downarrow^{\Sym{mn}}_{\Sym{p}}$
of the irreducible representation $V^{n^m}$ of the symmetric group
$\Sym{mn}$ which corresponds to the rectangular diagram $n^m$. 
\end{theorem}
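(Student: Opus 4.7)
The plan is to translate \cref{theo:enum} into the probabilistic language by identifying, for each of the three constructions appearing in \cref{theo:main}, the explicit probability that a given Young diagram $\lambda\subseteq n^m$ with $p$ boxes is produced, and then observing that these three formulas coincide.

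First, I would fix the meaning of ``random irreducible component.'' For any completely reducible representation $V=\bigoplus_{i} m_i W_i$ of a (possibly compound) group, the natural measure on its isotypic components assigns to $W_i$ the probability $m_i \dim W_i/\dim V$. Applied to the skew Howe decomposition \eqref{eq:decomposition}, the probability of obtaining $(\lambda,\lambda')$ is
\[
P_{\mathrm{Howe}}(\lambda)=\frac{\dim(S^{\lambda}\C^m)\,\dim(S^{\lambda'}\C^n)}{\dim\Lambda^p(\C^m\otimes\C^n)}.
\]
Applied to $V^{n^m}\big\downarrow^{\Sym{mn}}_{\Sym{p}}$, iterated branching for the symmetric group shows that the multiplicity of $V^{\lambda}$ in the restriction equals the number of chains of diagrams from $n^m$ down to $\lambda$, i.e.\ $f^{n^m/\lambda}$; hence
\[
P_{\mathrm{restr}}(\lambda)=\frac{f^{\lambda}\, f^{n^m/\lambda}}{f^{n^m}}.
\]

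Next, I would treat the rectangular-tableau model. A uniformly random standard Young tableau $T$ of shape $n^m$ is determined by the pair $(T_{\leq p},T_{>p})$ consisting of the subtableau of entries $\leq p$ (a standard tableau of some shape $\lambda$) and the complementary skew subtableau of shape $n^m/\lambda$. Since these pairs are in bijection with $T$, the shape of $T_{\leq p}$ is $\lambda$ with probability
\[
P_{\mathrm{tab}}(\lambda)=\frac{f^{\lambda}\, f^{n^m/\lambda}}{f^{n^m}},
\]
which already gives $P_{\mathrm{tab}}=P_{\mathrm{restr}}$ and hence proves the equivalence of the two formulations within \cref{theo:main}.

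Finally, the identification $P_{\mathrm{Howe}}(\lambda)=P_{\mathrm{tab}}(\lambda)$ is precisely the content of \eqref{eq:main-identity} in \cref{theo:enum}, so invoking that theorem closes the argument. There is essentially no technical obstacle here beyond making these three interpretations precise; the only place where care is needed is in the restriction step, where one must justify that successive one-step branching really produces the count $f^{n^m/\lambda}$, and that the induced ``Plancherel-type'' weighting on isotypic components has the stated form. Once those points are recorded, \cref{theo:main} follows directly from \cref{theo:enum} without any further computation.
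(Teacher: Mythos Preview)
Your argument is correct: once \cref{theo:enum} is in hand, the three probabilities $P_{\mathrm{Howe}}$, $P_{\mathrm{restr}}$, $P_{\mathrm{tab}}$ are exactly as you write them, and the identity \eqref{eq:main-identity} immediately gives the equality of distributions. The paper itself acknowledges this link in the paragraph following \cref{def:random-representation}.

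However, the paper's actual proof of \cref{theo:main} in \cref{sec:repr-proof} takes a genuinely different route that does \emph{not} pass through \cref{theo:enum}. Instead of comparing explicit probabilities, the paper views $\Lambda^p(\C^m\otimes\C^n)$ as a module over the \emph{center} $Z\C[\Sym{p}]$ of the symmetric group algebra (since it is not a $\Sym{p}$-module), and shows via Schur--Weyl duality that the induced measure on Young diagrams agrees with the $\GLm\times\GLn$ one (\cref{prop:SchurWeyl}). The key technical step (\cref{lem:do-not-depend}) is a character computation using Jucys--Murphy elements to show that the normalized character of $\Lambda^p(\C^m\otimes\C^n)$ is the restriction to $\Sym{p}$ of the character of $\Lambda^{mn}(\C^m\otimes\C^n)$, which is then identified with the irreducible character $\chi^{n^m}$.

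Your approach is shorter and more transparent, but it leans entirely on the hook-length and hook-content manipulations behind \cref{theo:enum}. The paper's approach is self-contained on the representation-theoretic side, avoids those combinatorial identities, and yields side benefits (e.g.\ an alternative proof of Stanley's rectangular character formula, as noted in the paper). Either argument is a valid proof of \cref{theo:main}; they simply locate the work in different places.
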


Above, when we speak about \emph{random irreducible component of a representation}
we refer to the following concept.

\begin{definition}
\label{def:random-representation}
For a representation $V$ of a group $G$ we consider its decomposition into
irreducible components
\[ V = \bigoplus_{\zeta \in \widehat{G} } m_\zeta V^\zeta, \]
where $m_\zeta\in\{0,1,\dots\}$ denotes the multiplicity of $V^\zeta$ in $V$.
This defines a probability measure $\mathbb{P}_V$ 
on the set $\widehat{G}$ of irreducible representations 
given by
\[ \mathbb{P}_V(\zeta)=\mathbb{P}^G_V(\zeta) 
:= \frac{m_\zeta \operatorname{dim} V^{\zeta}}{\operatorname{dim} V}.\]
\end{definition}

With this definition in mind, each side of the identity
\eqref{eq:main-identity} from \cref{theo:enum} can be interpreted as 
the probability that an 
appropriate random Young diagram (which appears in \cref{theo:main})
has a specified shape. This provides the link between \cref{theo:enum}
and \cref{theo:main}.

\subsection{Application: back to Landsberg's problem}
\label{sec:applications}

\begin{figure}
\centering
\includegraphics[scale=0.8]{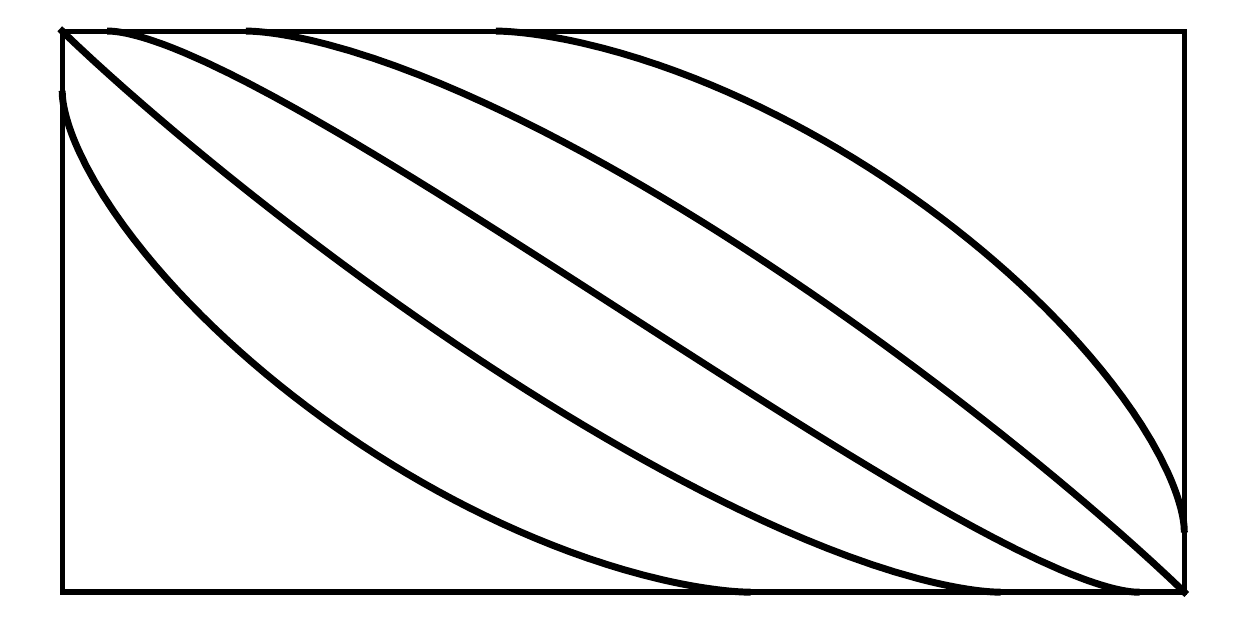}
\caption{Asymptotic limit shapes of typical random Young diagrams
which appear in \cref{theo:main}, cf.~\cite[Figure 3]{Pittel2007}.
We draw Young diagrams in the French convention.
In this example the rectangle ratio is given by $\frac{m}{n}=\frac{1}{2}$.
The limit curves correspond to $\frac{p}{mn}\in\left\{ \frac{1}{6},\frac{2}{6},
\frac{3}{6},\frac{4}{6},\frac{5}{6}\right\}$.}
\label{fig:romik}
\end{figure}

The problem of Landsberg is exactly a 
question about the statistical properties of the random Young diagram
$\lambda$ which appears in \cref{theo:main}.
This result gives an alternative description of $\lambda$
in terms of the representation theory of the symmetric groups $\Sym{p}$
in which many asymptotic problems have well-known answers.
Fortunately, this happens to be the case for the problem of understanding the 
restriction of irreducible representations which we encounter
in \cref{theo:main}.

In particular, the law of large numbers for the corresponding random
Young diagrams has been proved in much wider generality by Biane 
\cite[Theorem 1.5.1]{Biane1998}
using the language of \emph{free cumulants} of Young diagrams.
The asymptotic Gaussianity of their fluctuations around the limit shape has been proved 
by the second-named author
\cite[Example 7 combined with Theorem 8]{Sniady2006}
using the same language.

In the specific case of the restriction $V^{n^m}\big\downarrow^{\Sym{mn}}_{\Sym{p}}$
which is in the focus of the current paper, the above-mentioned generic tools
\cite{Biane1998,Sniady2006} can be applied in the scaling when
$m,n,p\to\infty$ tend to infinity
in such a way that the rectangle ratio $\frac{m}{n}$ converges to a strictly positive limit
and the fraction $\frac{p}{mn}$ converges to some limit.
Pittel and Romik \cite{Pittel2007} have worked out this specific example
and, among other results, found explicit asymptotic limit shapes of typical Young diagrams
which contribute to such representations, see \cref{fig:romik}.
In the light of \cref{theo:main}, the above references provide a partial answer to the question of
Landsberg.

\subsection{Hypothetical extensions of \cref{theo:main}}

The formulation of \cref{theo:main} might suggest that it is a special case of
a more general result. We state it concretely as the following problem.

\begin{problem}
Find a natural \emph{quantum random walk} 
(in the spirit of Biane \cite{Biane1991})
on the set of irreducible representations (of some group? of some algebra?) 
with the property that 
the probability distribution on the set of paths of this random walk
can be identified (via some hypothetical analogue of \cref{theo:main})
with the uniform distribution on the set of standard Young tableaux
of rectangular shape $n^m$.
\end{problem}

\section{Proof of \cref{theo:enum}}
\label{sec:enum-proof}

First, we give an enumerative proof of \cref{theo:enum} using the classical dimension formulas:
the hook-length formula for $f^{\nu}$
$$f^{\nu}= \frac{ |\nu|!}{\prod_{\Box \in \nu} h_\Box} =: \frac{ |\nu|!}{H_\nu},$$
where $H_\nu$ denotes the product of hook lengths in $\nu$,
and the hook-content formula for the dimensions of representations of $\GLk$:
\begin{equation}
\label{eq:dimension-glk}
\dim S^\nu \C^k = s_{\nu}(1^k) = \frac{ \prod_{\Box\in \nu} (k+c(\Box))}{\prod_{\Box \in \nu}h_\Box},   
\end{equation}
where $h_\Box$ is the hook-length of a box $\Box$ in the diagram of $\nu$, and the content $c(\Box) = j-i$, 
if $\Box=(i,j)$ is at row $i$ and column $j$ of the diagram. 
Here $s_\nu(x_1,\ldots,x_k)$ is the corresponding Schur function.

\begin{claim}
\label{claim:complementary}
We have that 
\begin{equation}
\label{eq:complementary}
 s_{\nu}(1^n) = s_{m^n / \nu}(1^n) = s_{\bar{\nu}}(1^n)   
\end{equation}
for any $\nu \subseteq m^n$, 
where $\bar{\nu} = (m-\nu_n,m-\nu_{n-1},\ldots,m-\nu_1)$ is the complementary partition.   
\end{claim}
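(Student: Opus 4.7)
The claim asserts two equalities, and my plan is to prove them separately by elementary means, without passing through the hook-content formula.

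For the second equality $s_{m^n/\nu}(1^n)=s_{\bar\nu}(1^n)$, I would give a direct bijection on semistandard Young tableaux. The key observation is that the skew shape $m^n/\nu$, viewed as a subset of the ambient rectangle $m^n$ and rotated by $180\degree$, is precisely the straight shape $\bar\nu$. Given an SSYT $T$ of skew shape $m^n/\nu$ with entries in $\{1,\dots,n\}$, I define $T'$ of shape $\bar\nu$ by
\[
  T'(i,j) \;=\; n+1-T(n+1-i,\,m+1-j).
\]
I then need to check that the map $T\mapsto T'$ lands in SSYT of shape $\bar\nu$ with entries in $\{1,\dots,n\}$: reversing the row index and complementing the entry $k\mapsto n+1-k$ turns strict column-increase into itself, and reversing the column index turns weak row-increase into itself. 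Since the construction is an involution, equality of cardinalities follows.

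For the first equality $s_\nu(1^n)=s_{\bar\nu}(1^n)$, I would appeal to the standard $\GLn$-module isomorphism
\[
  S^{\bar\nu}\C^n \;\cong\; \bigl(S^\nu\C^n\bigr)^{\!*}\otimes (\det\C^n)^{\otimes m},
\]
which I would verify on highest weights: $(S^\nu\C^n)^*$ has highest weight $(-\nu_n,\dots,-\nu_1)$, and tensoring with $\det^m$ shifts each coordinate by $m$ to produce $(m-\nu_n,\dots,m-\nu_1)=\bar\nu$. Because duality and tensoring with a one-dimensional representation both preserve dimensions, this gives $\dim S^\nu\C^n=\dim S^{\bar\nu}\C^n$, which is exactly the first equality.

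The only real obstacle, modest as it is, will be the careful verification of the SSYT monotonicity in the rotation-complement bijection—confirming that the double reversal of indices together with the entry flip $k\mapsto n+1-k$ swaps ``weakly increasing rows'' with itself and ``strictly increasing columns'' with itself. The representation-theoretic step is essentially a one-line highest-weight check.
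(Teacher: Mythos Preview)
Your proof is correct. Both equalities are established by standard arguments, and the monotonicity check for your rotation--complement bijection goes through exactly as you sketch (reversing both indices and replacing $k\mapsto n+1-k$ preserves weak row-increase and strict column-increase).

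Your route differs from the paper's primary argument, though. The paper gives a single bijection $\mathrm{SSYT}(\nu,n)\to\mathrm{SSYT}(\bar\nu,n)$ by \emph{column-wise set complementation}: in each column of $\nu$ one replaces the set of entries by its complement in $\{1,\dots,n\}$, writes these above $\nu$ inside the rectangle $m^n$, and then rotates by $180\degree$. So the paper's bijection goes directly from $\nu$ to $\bar\nu$; the intermediate filling of $m^n/\nu$ is not itself an SSYT, and the skew term $s_{m^n/\nu}(1^n)$ is handled only implicitly. Your approach instead splits the chain: the equality $s_{m^n/\nu}(1^n)=s_{\bar\nu}(1^n)$ is the standard rotate-and-complement bijection (which in fact shows $s_{m^n/\nu}=s_{\bar\nu}$ as symmetric functions), while the outer equality $s_\nu(1^n)=s_{\bar\nu}(1^n)$ is the highest-weight/contragredient argument. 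The paper records this representation-theoretic argument as well, in a Remark following its bijective proof, so that half of your proof coincides with the paper's alternative. Your bijection is arguably the more transparent of the two, since the verification is a one-line symmetry check; the paper's column-complement bijection requires a short argument that the row inequalities survive.
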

\begin{proof}
By the combinatorial definition of Schur functions, 
we have that $s_{\nu}(1^n)$ is the number of SSYTs of shape $\nu$ and entries $1,\ldots,n$. 
Consider such a SSYT as sitting inside the rectangular $m^n$ box containing $\nu$. 
Each such a SSYT $T$ corresponds to a complementary SSYT $P$ of
shape $\bar{\nu}$ and entries $1,\ldots,n$ 
via the bijection which we describe in the following.

\begin{figure}[b]
    \subfloat[]{
        \centering
	      \ytableausetup{boxsize=4ex}
	      \ytableaushort{
	      44{*(lightgray)2}{*(lightgray)1}{*(lightgray)1}{*(lightgray)1}{*(lightgray)1}{*(lightgray)1},
	      3344{*(lightgray)4}{*(lightgray)2}{*(lightgray)2}{*(lightgray)2},
	      22333{*(lightgray)4}{*(lightgray)4}{*(lightgray)3},
	      1112233{*(lightgray)4}
	      }
           \label{bijection:A}    
    }
\quad
    \subfloat[]{
        \centering
	      \ytableausetup{boxsize=4ex}
	      \ytableaushort{
{*(lightgray)4},	
{*(lightgray)3}{*(lightgray)4}{*(lightgray)4},
{*(lightgray)2}{*(lightgray)2}{*(lightgray)2}{*(lightgray)4},
{*(lightgray)1}{*(lightgray)1}{*(lightgray)1}{*(lightgray)1}{*(lightgray)1}{*(lightgray)2}
}
\label{bijection:B}
  }
\caption{An example of the bijection from the proof of \cref{claim:complementary}
with $m=8$, $n=4$, and $\nu=(7,5,4,2)$.
\protect\subref{bijection:A}
The original tableau is in white, 
and the (not rotated yet) complementary tableau of skew shape $m^n/{\nu}$ is in gray.
\protect\subref{bijection:B}
The complementary tableau of shape $\bar{\nu}=(6,4,3,1)$.
}
\label{fig:complementary}
\end{figure}

In a given column $j$ of $m^n$, let $a_1<\cdots<a_{\nu'_j}$ be the entries of $T$ in this column. 
Let $ \{b_1<b_2<\cdots<b_{n - \nu'_j}\}:=\{1,\ldots,n\} \setminus \{a_1,\ldots,a_{\nu'_j}\}$ 
be the remaining numbers in $\{1,\ldots,n\}$. 
Write them in increasing order top to bottom in the column $j$ in $m^n$ above $T$ 
as in Figure~\ref{bijection:A}; note that we use the French convention for drawing Young diagrams.
Rotating the resulting tableau above $T$ by $180\degree$ we obtain a SSYT of shape $\bar{\nu}$ 
with entries in $\{1,\ldots,n\}$ as in Figure~\ref{bijection:B}
(the row inequalities are easily seen to be satisfied).
\end{proof}
\begin{remark}
\cref{claim:complementary} has also a purely representation-theoretic proof:
the left-hand side of \eqref{eq:complementary} is equal to the dimension of the 
representation $S^\nu\C^n$ of $\GLn$ which corresponds to the Young diagram $\nu$
while the right-hand side is equal to the dimension of the tensor product
of the one-dimensional representation $\GLn\ni g \mapsto (\det g)^m$
with the representation contragradient to~$S^\nu\C^n$. Their dimensions are clearly equal.
\end{remark}

We continue the proof of \cref{theo:enum}.
Using the claim for $\nu:=\lambda'$ we have that
\begin{equation}
\label{eq:dim-lambda-comp}
\dim S^{\lambda'}\C^n = s_{\lambda'}(1^n) = s_{\overline{\lambda'}}(1^n) = 
\frac{ \prod_{\Box \in \overline{\lambda'} }(n +c(\Box))}{H_{\overline{\lambda'}}}=
\frac{ \prod_{\Box \in \overline{\lambda'} }(n +c(\Box))}{H_{\bar{\lambda}}}.  
\end{equation}
The last equality follows from the observation that $\overline{\lambda'} = (\bar{\lambda})'$, 
where the second complement is taken in the $n^m$ rectangle and so $H_{\overline{\lambda'} } = H_{\bar{\lambda}}$.

\medskip

\begin{figure}[tb]
\centering
\hspace{-7ex}
\subfloat[]{
\centering
\begin{tikzpicture}[scale=0.75]
\draw[gray,very thin] (0,0) grid (8,6);
\fill[fill=gray,opacity=0.5] (0,5) -- (2,5) --(2,4)--(4,4)--(4,3)--(5,3)--(7,3)--(7,1)--(8,1)--(8,0)--(0,0)--cycle;
\draw[thick] (0,0) rectangle (8,6);
\draw[ultra thick] (0,6) -- (0,5) -- (2,5) --(2,4)--(4,4)--(4,3)--(5,3)--(7,3)--(7,1)--(8,1)--(8,6) --cycle;

\draw[dashed] (5.5,-1)--(5.5,7);
\draw[dashed] (-1,4.5)--(9,4.5);
\node[fill=white] at (5.5,4.5) {$\Box$};
\node at (0.5,-0.5) {$1$};
\node at (7.5,-0.5) {$n$};
\node at (-0.5,0.5) {$1$};
\node at (-0.5,5.5) {$m$};
\node at (8.5,5.5) {$1$};
\node at (8.5,0.5) {$m$};
\node at (7.5,6.5) {$1$};
\node at (0.5,6.5) {$n$};

\node[fill=white] at (-0.5,4.5) {$i$};
\node[fill=white] at (5.5,-0.5) {$j$};
\node[fill=white] at (8.5,4.5) {$j'$};
\node[fill=white] at (5.5,6.5) {$i'$};

\end{tikzpicture}
\label{subfig:rectangleA}
}~\subfloat[]{
\centering
\begin{tikzpicture}[scale=0.75,x={(0cm,-1cm)},y={(-1cm,0cm)}]
\draw[ultra thin,gray!30] (8,6) grid (0,0);
\begin{scope}
\clip (0,6) -- (0,5) -- (2,5) --(2,4)--(4,4)--(4,3)--(5,3)--(7,3)--(7,1)--(8,1)--(8,6) --cycle;
\draw[gray,thin] (8,6) grid (0,0);
\end{scope}
\draw[ultra thick] (0,6) -- (0,5) -- (2,5) --(2,4)--(4,4)--(4,3)--(5,3)--(7,3)--(7,1)--(8,1)--(8,6) --cycle;

\draw[dashed] (5.5,-1)--(5.5,7);
\draw[dashed] (-1,4.5)--(9,4.5);
\node[fill=white] at (5.5,4.5) {$\Box'$};
\node at (8.5,5.5) {$1$};
\node at (8.5,0.5) {$m$};
\node at (7.5,6.5) {$1$};
\node at (0.5,6.5) {$n$};

\node[fill=white] at (8.5,4.5) {$j'$};
\node[fill=white] at (5.5,6.5) {$i'$};

\end{tikzpicture}
\label{subfig:rectangleB}
}

\caption{The relationship between contents within $\lambda$, $\overline{\lambda'}$ and the rectangle $n^m$.
\protect\subref{subfig:rectangleA}
In this example $m=6$, $n=8$, 
$\lambda = (8,7,7,4,2)$ is drawn with gray background.
\protect\subref{subfig:rectangleB}
The partition $\overline{\lambda'} = (5,3,3,3,2,2,1,1)$ is drawn with white background. 
Figure \protect\subref{subfig:rectangleA} shows how $\lambda$ and $\overline{\lambda'}$
are combined together within the rectangle $n^m$ and how their respective coordinate
systems are related.
The content of $\Box'=(i',j')=(3,2)\in\overline{\lambda'}$ is $c(\Box')=2-3=-1$, the same box in $n^m$ is 
$\Box=(i,j)=(5,6)$ with $c(\Box)=1$ and 
$n+c(\Box') = 7 = m+c(\Box)$. }
\label{fig:contents} 
\end{figure}

Consider the partitions $\lambda$ and $\overline{\lambda'}$ as sitting inside $n^m$.
More specifically, any box 
\[ \Box'=(i',j') \in \overline{\lambda'}\] 
corresponds to the box 
\[ \Box=(i,j)=(m+1-j',n+1-i')\in n^m,\] 
see Figure~\ref{fig:contents}, 
and so the content $c(\Box')=c_{\bar{\lambda}'}(\Box')$ of $\Box$ regarded as a box of $\overline{\lambda'}$
fulfills
\[n+c_{\bar{\lambda}'}(\Box') = n+j' -i' = n+(m+1-i)-(n+1-j) = m+c_{n^m}(\Box),\] 
where the content $c(\Box)=c_{n^m}(\Box)$ is taken with respect to the $n^m$ partition. 

\medskip

Since $s_{n^m}(1^m) = 1$ 
(there is only one SSYT of shape $n^m$ and entries $1,\ldots,m$ 
since each column is forced to be $1,\ldots,m$),
it follows from the hook-content formula \eqref{eq:dimension-glk}
that 
\begin{equation}
\label{eq:onedimensional}
 \prod_{\Box \in n^m} (m+c(\Box)) = H_{n^m}.   
\end{equation}

\medskip

Thus by \eqref{eq:dim-lambda-comp}, then by
combining the diagrams $\lambda$ and $\overline{\lambda'}$
into the $n^m$ rectangle,
and by \eqref{eq:onedimensional}
\begin{align*}
\dim S^{\lambda} \C^m \dim S^{\lambda'}\C^n   &=
s_\lambda(1^m)\ s_{\lambda'}(1^n)  \\&=
\frac{ \prod_{\Box \in \lambda}  (m+c(\Box))}{H_\lambda} 
\frac{ \prod_{\Box' \in n^m /\lambda} (m+c(\Box'))}{H_{\bar{\lambda}}} \\
&=\frac{ \prod_{\Box \in n^m} (m +c(\Box) )}{H_\lambda H_{\bar{\lambda} }} = 
\frac{ H_{n^m} }{H_\lambda H_{\bar{\lambda}} }\\
&= \frac{ (mn)! / f^{n^m} }{ \big(p!/ f^\lambda\big)\big( (mn-p)!/f^{n^m/\lambda}\big)} = \binom{mn}{p} \frac{f^{\lambda} f^{n^m/\lambda} }{f^{n^m}}.
\end{align*}

This concludes the proof of \cref{theo:enum}.

\begin{remark}
Relationships between $H_\lambda$, $H_{\bar{\lambda}}$ and Schur function evaluations have also been derived 
by Stanley \cite{Stanley2001} who used them further 
in computations of the normalized symmetric group character corresponding 
to rectangular partitions. 
\end{remark}

\section{Proof of \cref{theo:main}}
\label{sec:repr-proof}

\subsection{Sketch of the proof}
We start by presenting a one-paragraph summary of the proof.
Schur--Weyl duality suggests exploring the link between the structure of the external power
\eqref{eq:decomposition} viewed as a representation of some general linear group $\GLm$
and the structure of the same space \eqref{eq:decomposition}, this time viewed as a
representation of the symmetric group $\Sym{p}$.
Regretfully, the external power \eqref{eq:decomposition} is \emph{not} a representation
of $\Sym{p}$. This approach can be rescued if, instead, we view the external power as a module over
the \emph{center} $Z\C[\Sym{p}]$ of the symmetric group algebra.
The character theory of the symmetric group $\Sym{p}$ can be easily adapted 
to the setting of $Z\C[\Sym{p}]$.
We will show that the characters of $\Lambda^p(\C^m\otimes\C^n)$ 
(for fixed values of $m$ and $n$ 
and for $p$ varying over $\{0,1,\dots,mn\}$) are closely related to each other;
in this way it is enough to identify such a character for $p=mn$.
Yet another application of Schur--Weyl duality shows that this particular character 
is irreducible
and corresponds to the rectangular Young diagram $n^m$.

\medskip

In the remaining part of this section we will present the details of the above
sketch.
For clarity the proof is split into a number of propositions.

\subsection{Schur--Weyl duality. $\GLm$ versus $\Sym{p}$}

\begin{proposition}
\label{prop:SchurWeyl}
The probability distributions of the following two pairs of random Young diagrams are equal:
\begin{itemize}
   \item the pair of random Young diagrams which correspond to a random irreducible component of
$\Lambda^p(\C^m\otimes \C^n)$ \emph{regarded as $\GLm\times \GLn$-module}, and 

   \item the pair of random Young diagrams which correspond to a random irreducible component of
$\Lambda^p(\C^m\otimes \C^n)$ \emph{regarded as $Z\C(\Sym{p})\times Z\C(\Sym{p})$-module}.
\end{itemize}   
\end{proposition}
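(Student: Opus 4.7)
The plan is to realise both actions inside the ambient space $(\C^m\otimes\C^n)^{\otimes p}\cong (\C^m)^{\otimes p}\otimes (\C^n)^{\otimes p}$ via the standard Schur--Weyl setup and to show that the two decompositions refine exactly the same block decomposition of $\Lambda^p(\C^m\otimes\C^n)$. First I would identify $\Lambda^p(\C^m\otimes\C^n)$ with the $\sgn$-isotypic component inside $(\C^m\otimes\C^n)^{\otimes p}$ for the diagonal $\Sym{p}$-action permuting tensor factors; under the identification with $(\C^m)^{\otimes p}\otimes (\C^n)^{\otimes p}$ this diagonal subgroup sits inside $\Sym{p}\times\Sym{p}$, the two factors of which permute the $\C^m$- and the $\C^n$-tensor positions independently. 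Combining two copies of Schur--Weyl duality yields
\begin{equation*}
(\C^m)^{\otimes p}\otimes (\C^n)^{\otimes p}=\bigoplus_{\lambda,\mu\vdash p}\bigl(S^\lambda\C^m\otimes S^\mu\C^n\bigr)\otimes\bigl(V^\lambda\otimes V^\mu\bigr)
\end{equation*}
as a $(\GLm\times\GLn)\times(\Sym{p}\times\Sym{p})$-module.

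The key step is to observe that $Z\C[\Sym{p}]\times Z\C[\Sym{p}]$ commutes with the diagonal $\Sym{p}$: each factor is central in its own copy of $\Sym{p}$, and the two copies commute with each other because they act on disjoint sets of tensor factors, so every element of the product commutes with $(\sigma,\sigma)$ for $\sigma\in\Sym{p}$. Hence $Z\C[\Sym{p}]\times Z\C[\Sym{p}]$ preserves the $\sgn$-isotypic component $\Lambda^p(\C^m\otimes\C^n)$, even though the individual symmetric groups do not. Next I would apply Schur's lemma: each $z\in Z\C[\Sym{p}]$ acts on the irreducible $\Sym{p}$-module $V^\lambda$ by the scalar $\chi^\lambda(z)$, so the $(\chi^\lambda,\chi^\mu)$-isotypic component of the $Z\C[\Sym{p}]\times Z\C[\Sym{p}]$-action on the ambient tensor space is exactly the $(\lambda,\mu)$-isotypic component for $\GLm\times\GLn$. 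Restricting to $\Lambda^p(\C^m\otimes\C^n)$, both decompositions refine the \emph{same} block decomposition.

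Finally I would translate blocks into probabilities using \cref{def:random-representation}. On the $(\lambda,\mu)$-block the intersection with $\Lambda^p$ has dimension $\dim S^\lambda\C^m\cdot\dim S^\mu\C^n\cdot\dim\operatorname{Hom}_{\Sym{p}}(\sgn,V^\lambda\otimes V^\mu)$, and the last factor equals $[\mu=\lambda']$, recovering skew Howe duality as a consistency check. For the $\GLm\times\GLn$-measure, the irreducible $S^\lambda\C^m\otimes S^{\lambda'}\C^n$ has dimension $\dim S^\lambda\C^m\cdot\dim S^{\lambda'}\C^n$ and multiplicity one; for the $Z\C[\Sym{p}]\times Z\C[\Sym{p}]$-measure the $1$-dimensional character $(\chi^\lambda,\chi^{\lambda'})$ has multiplicity $\dim S^\lambda\C^m\cdot\dim S^{\lambda'}\C^n$ and dimension one. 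The product $m_\zeta\dim V^\zeta$ is the same in both cases, so the two probability distributions coincide. The only real subtlety is precisely this bookkeeping: the two measures distribute the block's dimension between multiplicity and dimension in opposite ways, and one must check that the product, not the individual factors, is what enters \cref{def:random-representation}.
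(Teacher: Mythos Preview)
Your proposal is correct and follows essentially the same route as the paper: embed $\Lambda^p(\C^m\otimes\C^n)$ in the ambient tensor power, apply Schur--Weyl duality to each factor so that the $\GLm\times\GLn$-isotypic blocks coincide with the $Z\C[\Sym{p}]\times Z\C[\Sym{p}]$-isotypic blocks, and read off that the two probability measures agree because each is determined by the common block dimensions. Your treatment is in fact a bit more explicit than the paper's---you spell out why the centre commutes with the diagonal $\Sym{p}$, and you carry out the multiplicity/dimension bookkeeping that the paper leaves implicit---but the underlying argument is the same.
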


In the following we shall present the missing details of notation and the proof of this proposition.

\medskip

The tensor power
\begin{equation}
\label{eq:tensor-product}
(\C^m \otimes \C^n )^{\otimes p}  \cong 
   (\C^m)^{\otimes p} \otimes (\C^n)^{\otimes p}   
\end{equation}
carries a natural structure of a $(\GLm\times \Sym{p}) \times (\GLn \times \Sym{p})$-module
and, more generally, a structure of a $(\GLm\times \C[\Sym{p}]) \times (\GLn \times \C[\Sym{p}])$-module:
the general linear group $\GLm$ acts on all corresponding factors $\C^m$
while
the symmetric groups $\Sym{p}$ act by permuting the factors in the tensor product. 
Regretfully, its subspace
\begin{equation}
\label{eq:space}
\Lambda^p(\C^{\otimes m} \otimes \C^{\otimes n})   
\end{equation}
which is in the focus in the current paper
is \emph{not} invariant under the action of the symmetric groups $\Sym{p}$
which are factors in
\[ (\GLm\times \Sym{p}) \times (\GLn \times \Sym{p}) \]
and under the action of the corresponding symmetric group algebras $\C[\Sym{p}]$.
On the bright side, it is not difficult to check that
the space \eqref{eq:space} \emph{is} invariant under the action of the \emph{center}
$Z\C[\Sym{p}]$ of the symmetric group algebra.
Thus both \eqref{eq:tensor-product} and \eqref{eq:space} carry a structure of
a $(\GLm\times Z\C[\Sym{p}]) \times (\GLn \times Z\C[\Sym{p}])$-module.
Our proof of \cref{theo:main} will be based on exploration of this module structure.

\bigskip

\begin{proof}[Proof of \cref{prop:SchurWeyl}]
Consider the decomposition of the module \eqref{eq:space} into irreducible components:
\begin{equation}
\label{eq:decomposition2}
 \Lambda^p(\C^{\otimes m} \otimes \C^{\otimes n}) = 
\bigoplus_{\mu,\lambda,\nu,\pi} 
m_{\mu,\lambda,\nu,\pi}\ 
S^\mu(\C^m) \otimes V^{\lambda} \otimes S^\nu(\C^n) \otimes V^{\pi},     
\end{equation}
where $m_{\mu,\lambda,\nu,\pi}\in\{0,1,\dots\}$ denotes the multiplicity,
$V^{\lambda}$ denotes the irreducible representation of the symmetric group
$\Sym{p}$ which corresponds to the Young diagram $\lambda$
and the sum runs over Young diagrams $\mu,\lambda,\nu,\pi$.

Schur--Weyl duality implies that a decomposition (analogous to \eqref{eq:decomposition2})
of the tensor power \eqref{eq:tensor-product} into irreducible components 
(no matter whether we regard \eqref{eq:tensor-product} as a 
$(\GLm\times \Sym{p}) \times (\GLn \times \Sym{p})$-module
or as a $(\GLm\times Z\C[\Sym{p}]) \times (\GLn \times Z\C[\Sym{p}])$-module)
involves only summands for which $\mu=\lambda$ and $\nu=\pi$.
It follows that the same is true for its 
$(\GLm\times Z\C[\Sym{p}]) \times (\GLn \times Z\C[\Sym{p}])$-submodule 
\eqref{eq:space}, thus
\begin{equation}
\label{eq:decomposition3}
\Lambda^p(\C^{\otimes m} \otimes \C^{\otimes n}) = 
\bigoplus_{\lambda,\pi} 
m_{\lambda,\pi}\ 
S^\lambda(\C^m) \otimes W^{\lambda} \otimes S^\pi(\C^n) \otimes W^{\pi}     
\end{equation}
for some multiplicities $m_{\lambda,\pi}:=m_{\lambda,\lambda,\pi,\pi}$.

The linear span of all irreducible components of \eqref{eq:decomposition3}
which correspond to a given pair of Young diagrams $(\mu,\nu)$ 
remains the same, no matter if we regard \eqref{eq:decomposition3} 
as a $\GLm\times\GLn$-module or
as a $Z\C[\Sym{p}]\times Z\C[\Sym{p}]$ module.
It follows that the corresponding probability distributions are equal.
This concludes the proof of \cref{prop:SchurWeyl}.
\end{proof}

\cref{prop:SchurWeyl} shows that in order to prove \cref{theo:main} 
it is enough to understand the structure
of $\Lambda^{p}(\C^m\otimes \C^n)$ as a $Z\C[\Sym{p}]\times Z\C[\Sym{p}]$-module.
We shall do it in the following.

\subsection{Normalized characters}

In this paper whenever we refer to a \emph{trace} of a
matrix $A=(A_{ij})_{1\leq i,j\leq d}\in M_d(\C)$ we mean
\emph{the normalized trace}
\[ \tr_d A := \frac{1}{d} \sum_{1\leq i\leq d} A_{ii}\]
as opposed to \emph{the non-normalized trace}
\[ \Tr A := \sum_{1\leq i\leq d} A_{ii}.\]
For an operator $A\in\End V$ we denote by $\tr_V A$ its normalized trace,
defined analogously.

Also, by the \emph{character} of a group representation 
$\rho\colon G\to \End V$ we mean \emph{the normalized character} 
$\chi_V\colon G\to \C$ given by
\begin{equation}
\label{eq:group-character}
 \chi_V(g):= \tr_V \rho_g,    
\end{equation}
which is defined in terms of the normalized
trace $\tr_V$.

\subsection{Modules over the center $Z\C[G]$ of the group algebra}
\label{sec:projection}

In the following we will consider the following setup.
We assume that $G$ is a finite group and $W$ is a $G$-module.
We also assume that $\Pi\colon W\to W$ is an idempotent $\Pi^2 = \Pi$
with the property that $\Pi$ commutes with the action of the center $Z\C[G]$ of the
group algebra. We denote by $V:=\Pi W$ the image of $\Pi$.
The space $V$ is invariant under the action of $Z\C[G]$; in other words $V$ can be regarded
as a $Z\C[G]$-module.

We define the \emph{character} of the $Z\C[G]$-module $V$ as a function $\chi_V\colon G\to \C$ given by
\begin{equation}
\label{eq:character1}
 \chi_V(g) := \frac{1}{\operatorname{dim} V} \Tr \big[ \Pi \ \rho_g\big],   
\end{equation}
where $\rho_g\colon W\to W$ denotes the action of $g\in G$ on $W$.

\medskip

Our goal in this proof is to understand $V$ as a $Z\C[G]$-module
and to identify the corresponding character.

\subsection{The key example}
\label{sec:example}

The key example we should keep in mind is the tensor product
\begin{equation}
\label{eq:tensor-productA}
W=W_p :=   (\C^m)^{\otimes p} \otimes (\C^n)^{\otimes p}   
\end{equation}
which carries a natural structure of $G$-module, 
where 
\[G=G_p:=\Sym{p}\times \Sym{p}\] 
is the Cartesian product of the symmetric groups which acts on $W$ by permuting the factors in 
the tensor product.

By rearranging the order of the factors we see that
\begin{equation}
\label{eq:rearranged}
 W\cong (\C^m \otimes \C^n )^{\otimes p}     
\end{equation}
is a tensor power which carries another structure, this time of a $\Sym{p}$-module. 
This action is related to the action from \eqref{eq:tensor-productA} via
the diagonal inclusion of groups
given by 
\begin{equation}
\label{eq:inclusion}
\Sym{p}\ni g\mapsto (g,g) \in \Sym{p} \times \Sym{p}.   
\end{equation}

\medskip

We consider the projection $\Pi$ which is given by the action on \eqref{eq:rearranged} of the central projection 
\begin{align}
\label{eq:antisymm} 
\Pi= \Pi_p &= \frac{1}{p!}  \sum_{g\in\Sym{p}}  (-1)^g g \in Z\C[\Sym{p}] \\ 
\intertext{which under the inclusion \eqref{eq:inclusion} becomes the action
on \eqref{eq:tensor-productA} of the element}
\Pi &= \frac{1}{p!}  \sum_{g\in\Sym{p}}  (-1)^g (g,g) \in \C[\Sym{p}\times \Sym{p}]
\end{align}
which is \emph{not} central.

The image $V=\Pi W$ of this projection is the external power
\[ V= V_p = \Lambda^p(\C^m\otimes \C^n)\]
which is in the focus of the current paper.

\subsection{Modules over the center $Z\C[G]$, revisited}
Consider now a more general situation than in \cref{sec:projection}
in which $V$ is an arbitrary $Z\C[G]$-module, without any additional structure.

\medskip

We can define the \emph{character} $\chi_V\colon G\to\C$ of $V$ by the formula
\begin{equation}
\label{eq:character2}
 \chi_V(g):= \tr_V \rho\left[ \frac{1}{|G|} \sum_{h\in G} h g h^{-1} \right].    
\end{equation}

Note that in the specific setup considered in \cref{sec:projection}
the formulas \eqref{eq:character1} and \eqref{eq:character2} 
define the same function. 

Also, in the specific setup in which 
the structure of a $Z\C[G]$-module on $V$ arises from the structure
of a $G$-module,
the usual character of the group $G$ given by
\eqref{eq:group-character} coincides with the character from \eqref{eq:character2}.

\medskip

The algebra $Z\C[G]$ is commutative, hence each irreducible $Z\C[G]$-module 
is one-dimensional. 
Each irreducible $G$-module $V$, viewed as a 
$Z\C[G]$-module, is a direct sum of a number of copies of a single irreducible
$Z\C[G]$-module $V'$. Their characters are equal: $\chi_{V}=\chi_{V'}$.

More generally, there is a bijective correspondence between
(equivalence classes of) irreducible $Z\C[G]$-modules and
(equivalence classes of) irreducible $G$-modules;
the characters of the corresponding modules are equal.

\medskip

Any $Z\C[G]$-module $V$ defines
(analogously as in \cref{def:random-representation})
a probability measure $\mathbb{P}^{Z\C[G]}_V$ on the set of irreducible representations of $G$.
Note that in the specific situation when the structure of a $Z\C[G]$-module
on $V$ arises from the structure of a $G$-module, the corresponding measures
are equal: $\mathbb{P}^G_V=\mathbb{P}^{Z\C[G]}_V$, no matter if we regard $V$ as
a $G$-module or as a $Z\C[G]$-module.

These probability measures are directly related to the character of the corresponding module:
\[ \mathbb{P}_V(\zeta) = \chi_V(p_\zeta) = \sum_{g\in G} \chi_V(g)\ p_\zeta(g),\]
where $p_{\zeta}\in Z\C[G]$ is the minimal central projection which corresponds
to the irreducible representation $\zeta\in \widehat{G}$.

\subsection{Characters of the external power do not depend on the exponent}
We come back to the specific setup from \cref{sec:example}.
Assume that $0\leq p < p' \leq mn$.
There is a natural inclusion 
$G_p=\Sym{p}\times \Sym{p} \subseteq \Sym{p'}\times \Sym{p'}=G_{p'}$ 
of the corresponding groups which allows us to compare the characters
of the external powers $V_p=\Lambda^p(\C^m\otimes \C^n)$ for various values of the exponent~$p$.
\begin{lemma}
\label{lem:do-not-depend}
For $0\leq p < p' \leq mn$ the character $\chi_{V_p} \colon G_p \to \C$ is equal to the restriction
of the character $\chi_{V_{p'}} \colon G_{p'} \to \C$.
\end{lemma}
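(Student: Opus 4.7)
The plan is to reduce to the case $p' = p+1$ by induction, and prove that case by computing the two characters explicitly and comparing them via a combinatorial identity on cycle-generating sums over $\Sym{p+1}$. Using the definition \eqref{eq:character1} with $\Pi_p$ as in \eqref{eq:antisymm}, together with the standard observation that $(g, g') \in \Sym{p} \times \Sym{p}$ acting on $W_p = (\C^m)^{\otimes p} \otimes (\C^n)^{\otimes p}$ by permuting factors has non-normalized trace $m^{c(g)} n^{c(g')}$ (with $c$ counting cycles), one obtains
\[ \chi_{V_p}(\sigma,\tau) = \frac{1}{\binom{mn}{p}\, p!} \sum_{g\in \Sym{p}} (-1)^g\, m^{c(g\sigma)}\, n^{c(g\tau)}. \]
Denote the double sum above by $p! \cdot F_p(\sigma,\tau)$. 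Since $\binom{mn}{p+1} = \tfrac{mn-p}{p+1}\binom{mn}{p}$, the identity $\chi_{V_{p+1}} = \chi_{V_p}$ becomes the concrete claim $F_{p+1}(\sigma,\tau) = \tfrac{mn-p}{p+1}\,F_p(\sigma,\tau)$ for $(\sigma,\tau) \in G_p$.

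To establish this I would use the coset decomposition $\Sym{p+1} = \bigsqcup_{j=1}^{p+1} d_j\, \Sym{p}$, where $d_j = (j,p+1)$ for $j \leq p$ and $d_{p+1} = e$, and $\Sym{p}$ denotes the stabilizer of $p+1$. The coset $j = p+1$ contributes $\tfrac{mn}{p+1}\, F_p(\sigma,\tau)$: here $d_j$ is trivial, and both $h\sigma$ and $h\tau$ acquire one additional fixed point when re-interpreted inside $\Sym{p+1}$, producing the factor $m \cdot n$.

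The cosets $j = 1, \dots, p$ require the key combinatorial observation: for any $\pi \in \Sym{p}$ (regarded as an element of $\Sym{p+1}$ fixing $p+1$) and any $1 \leq j \leq p$,
\[ c_{\Sym{p+1}}\bigl((j,p+1)\,\pi\bigr) = c_{\Sym{p}}(\pi). \]
Indeed, left-multiplying $\pi$ by $(j, p+1)$ either merges the two fixed points $\{j\}$ and $\{p+1\}$ into a $2$-cycle (when $j$ is a fixed point of $\pi$) or splices $\{p+1\}$ into the cycle of $\pi$ containing $j$ (otherwise); in both subcases the number of cycles drops by exactly one relative to $\pi$ viewed in $\Sym{p+1}$. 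Applying this identity to both $h\sigma$ and $h\tau$ makes the inner sum independent of $j$, and each such coset contributes $-\tfrac{1}{p+1}\, F_p(\sigma, \tau)$. Summing the $p+1$ cosets yields $F_{p+1} = \tfrac{mn}{p+1} F_p - \tfrac{p}{p+1} F_p = \tfrac{mn-p}{p+1} F_p$, as required.

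The main obstacle is the cycle-count identity in the third paragraph: one must correctly distinguish the two subcases ($j$ a fixed point of $\pi$ versus $j$ inside a longer cycle) and be careful in the bookkeeping between $\Sym{p}$ and $\Sym{p+1}$, which carries the extra ``ghost'' fixed point $\{p+1\}$. Everything else reduces to direct summation and the binomial recursion.
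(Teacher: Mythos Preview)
Your proof is correct. Both your argument and the paper's reduce to $p'=p+1$ and rest on the same underlying decomposition: your left-coset splitting $\Sym{p+1}=\bigsqcup_{j}(j,p+1)\,\Sym{p}$ is precisely the factorization $\Pi_{p+1}=\frac{1}{p+1}\bigl(1-X_{p+1}\bigr)\Pi_p$ that the paper invokes, with $X_{p+1}=\sum_{j=1}^{p}(j,p+1)$ the Jucys--Murphy element. The difference is in the level of abstraction. The paper stays operator-theoretic: it observes $\rho_{g'}=\rho_g\otimes 1$ on $W_{p+1}=W_p\otimes(\C^m\otimes\C^n)$ and computes the partial trace $(1\otimes\Tr)\frac{1}{p+1}(1-X_{p+1})=\frac{mn-p}{p+1}$ in one line. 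You instead first expand $\chi_{V_p}$ as a cycle-counting sum and then prove the recursion $F_{p+1}=\frac{mn-p}{p+1}F_p$ via the combinatorial identity $c_{\Sym{p+1}}\bigl((j,p+1)\pi\bigr)=c_{\Sym{p}}(\pi)$, which is exactly the trace of a single transposition unwound into the ``a transposition merges two distinct cycles'' fact. Your route is more elementary (no partial traces or Jucys--Murphy language needed) and already displays the explicit formula that the paper only alludes to later in its remark on Stanley's character formula; the paper's route is shorter and more conceptual.
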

\begin{proof}
It is enough to prove this result in the case when $p'=p+1$.   

\medskip

Let $g=(\pi_1,\pi_2)\in G_{p}=\Sym{p}\times \Sym{p}$.
We denote by $\pi_1',\pi_2'\in\Sym{p+1}$ the corresponding permutations from the larger symmetric group;
in this way $g$ corresponds to $g'=(\pi_1',\pi_2')\in G_{p+1}$. 

\medskip

We consider the decomposition
\begin{equation}
\label{eq:decomposition-tensor}
 W_{p+1} = (\C^m \otimes \C^n)^{\otimes (p+1)} = W_p \otimes (\C^m\otimes \C^n).   
\end{equation}
With respect to this decomposition, 
the action of $\rho_{g'}$ on $W_{p+1}$ coincides with the action of $\rho_g \otimes 1$:
\begin{equation}
\label{eq:action}
   \rho_{g'} = \rho_g \otimes 1.
\end{equation}

\medskip

The projection $\Pi_{p+1}$ viewed as in \eqref{eq:antisymm} as an element of $\C[\Sym{p+1}]$
can be written as the product
\begin{equation}
\label{eq:JM}
  \Pi_{p+1} = \frac{1}{p+1}  \big[ 1 - X_{p+1} \big]\ \Pi_{p},   
\end{equation}
where 
\[ X_{p+1}= (1,p+1) + \cdots + (p,p+1) \in \C[\Sym{p+1}] \]
is a Jucys--Murphy element; above $(i,j)\in\Sym{p+1}$ denotes the transposition which interchanges $i$ with~$j$.
We view now \eqref{eq:JM} as an operator acting on \eqref{eq:decomposition-tensor};
with this perspective
\begin{equation}
\label{eq:JM2}
 \Pi_{p+1} = \frac{1}{p+1} \big[ 1 - X_{p+1} \big] \big( \Pi_{p} \otimes 1\big)   
\end{equation}
is an element of $\End (W_p) \otimes \End (\C^m\otimes \C^n)$.

\medskip

A direct calculation on the elementary tensors shows that 
application of the (non-normalized) trace $\Tr\colon \End (\C^m\otimes \C^n) \to\C$ 
to the second factor
in \eqref{eq:JM2} yields a multiple of identity:
\[ (1\otimes \Tr) \frac{1}{p+1} \big[ 1 - X_{p+1} \big]= \frac{mn-p}{p+1}.  \]
By combining this idea with \eqref{eq:action}, \eqref{eq:JM2}
it follows that
\[ \chi_{V_{p+1}} (g') = \frac{1}{\binom{mn}{p+1}} \Tr \left[ \Pi_{p+1}\ \rho_{g'} \right]= 
\frac{1}{\binom{mn}{p}} \Tr \left[ \Pi_p\ \rho_{g} \right] = \chi_{V_{p}} (g)\]
which concludes the proof.
\end{proof}

\subsection{The character of $\Lambda^p(\C^m\otimes \C^n)$}

Until now we considered the symmetric group $\Sym{p}$ as
the diagonal subgroup of $\Sym{p}\times \Sym{p}$ via \eqref{eq:inclusion}.
In the following we take a different perspective and we shall view the symmetric group
\[ \Sym{p} \cong \Sym{p} \times \{\operatorname{id}\} \subseteq \Sym{p}\times\Sym{p} = G_p \]
as the first factor in the Cartesian product.

In this way the space $W_p=(\C^m)^{\otimes p} \otimes (\C^n)^{\otimes p} $ 
has a structure of a $\Sym{p}$-module
and $V_p=\Lambda^p(\C^m\otimes \C^n)$ a structure of a $Z\C[\Sym{p}]$-module.

\begin{corollary}
\label{coro:character-restriction}
The character $\chi_{V_p} \colon \Sym{p} \to \C$ of $Z\C[\Sym{p}]$-module $V_p=\Lambda^p(\C^m\otimes \C^n)$
is equal to the restriction
of the irreducible character $\chi_{\mn}\colon \Sym{mn}\to \C$
of the symmetric group $\Sym{mn}$
which corresponds to the rectangular Young diagram $\mn$.   
\end{corollary}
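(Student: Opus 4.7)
My plan is first to reduce to the extreme case $p=mn$ via \cref{lem:do-not-depend}, and then to identify the character at $p=mn$ by Schur--Weyl duality. Iterating \cref{lem:do-not-depend} gives $\chi_{V_p} = \chi_{V_{mn}}\big|_{G_p}$; restricting further to the first-factor embedding $\Sym{p}\hookrightarrow \Sym{mn}\times\{\operatorname{id}\}$ reduces the statement to identifying $\chi_{V_{mn}}\colon \Sym{mn}\to \C$ with the irreducible rectangular character.

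I would then decompose $W_{mn} = (\C^m)^{\otimes mn}\otimes (\C^n)^{\otimes mn}$ by Schur--Weyl duality applied separately to each tensor factor, as
\[W_{mn} = \bigoplus_{\alpha,\beta} S^\alpha(\C^m)\otimes V^\alpha \otimes S^\beta(\C^n)\otimes V^\beta,\]
with $\alpha,\beta \vdash mn$ satisfying $\ell(\alpha)\leq m$ and $\ell(\beta)\leq n$. The projection $\Pi_{mn}$ antisymmetrizes the diagonal $\Sym{mn}$-action, hence extracts the sign isotypic subspace of $V^\alpha\otimes V^\beta$; this subspace is one-dimensional exactly when $\beta=\alpha'$ and zero otherwise. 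Combined with $|\alpha|=mn$ and the length constraints, only $\alpha=\nm$ can contribute, so $V_{mn} = S^{\nm}(\C^m)\otimes S^{\mn}(\C^n)\otimes L_{\nm}$ is one-dimensional, where $L_{\nm}\subseteq V^{\nm}\otimes V^{\mn}$ is the sign line --- recovering skew Howe duality in top degree.

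To identify the character, I would then note that any $z\in Z\C[\Sym{mn}]$ (acting via the first factor) operates on $W_{mn}$ as $\rho_z\otimes 1$, so on each Schur--Weyl summand it acts on the $V^\alpha$ tensorand by the scalar $\omega_\alpha(z)$, the central character of $V^\alpha$, and trivially on the remaining three tensorands. Since only $\alpha=\nm$ contributes to $V_{mn}$, the element $z$ acts on $V_{mn}$ by $\omega_{\nm}(z)$, which is precisely its action on the irreducible $\Sym{mn}$-module $V^{\nm}$. The correspondence between irreducible $G$-modules and $Z\C[G]$-modules recalled in \cref{sec:projection} then yields $\chi_{V_{mn}}=\chi_{\nm}$, as required.

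The main obstacle is conceptual rather than computational: the first-factor $\Sym{mn}$ does \emph{not} preserve the sign line $L_{\nm}$ --- only the diagonal copy of $\Sym{mn}$ does --- so one cannot invoke ordinary group character formulas directly. One must instead work consistently with $Z\C[\Sym{mn}]$-module characters via the averaging formula \eqref{eq:character2}, and exploit the fact that the first-factor \emph{center} does commute with the diagonal antisymmetrizer and hence does act on $V_{mn}$.
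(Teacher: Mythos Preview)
Your argument is correct and follows the same two-step route as the paper: invoke \cref{lem:do-not-depend} to reduce to $p=mn$, then identify the character there. For the identification step the paper appeals to \cref{prop:SchurWeyl} together with the one-dimensionality of $V_{mn}$ as a $\GLm\times\GLn$-module, whereas you unwind that proposition in this particular case, decomposing $W_{mn}$ by Schur--Weyl on each tensor factor and reading off the unique surviving summand after antisymmetrization; this is really the same argument made explicit rather than a different method.

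One small point worth flagging: your computation correctly produces $\chi_{V_{mn}}=\chi_{\nm}$ (the rectangle $n^m$ with $m$ rows and $n$ columns), which agrees with \cref{theo:main}. The printed statement of the corollary and its proof write $\mn$ where they should write $\nm$; your ``as required'' silently repairs this typo rather than matching the text.
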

\begin{proof}
In the light of \cref{lem:do-not-depend} it is enough to prove this result
for the maximal possible value $p=mn$ and to show the equality
\begin{equation}
\label{eq:postponed}
 \chi_{V_{mn}} = \chi_{\mn}.  
\end{equation}
We shall do it in the following.

For $p=mn$ the external power
$V_{mn}$ is a one-dimensional representation of
$\GLm\times \GLn$ which corresponds to a pair of Young diagrams 
\begin{equation}
\label{eq:mn}
(\mn, \nm).   
\end{equation}
The corresponding random pair of Young diagrams (associated via \cref{def:random-representation}) 
is deterministic, equal to \eqref{eq:mn}.
\cref{prop:SchurWeyl} shows that if we view $V_{mn}$
as a $Z\C[\Sym{mn}]\times Z\C[\Sym{mn}]$-module, the corresponding pair of random Young diagrams
is also deterministic, equal to \eqref{eq:mn}.
If we regard $V_{mn}$ as $Z\C[\Sym{mn}]$-module, this implies that its character is equal to the character
of the irreducible representation $V^{\mn}$. This concludes the proof.
\end{proof}

\subsection{Pair of random Young diagrams $(\lambda,\mu)$}

One of the claims in \cref{theo:main} is that
the random Young diagrams $\lambda,\mu$ are related to each other by
the equality $\mu=\lambda'$.
This result would follow from the decomposition \eqref{eq:decomposition}.
Below we present a short proof based on the original ideas of Howe
\cite[Section 4.1.2]{Howe}.

\medskip

For any permutations $g_1,g_2\in\Sym{p}$ the action of 
$\left(g_2^{-1},g_2^{-1}\right)\in\Sym{p}\times \Sym{p}$
on the external power $\Lambda_p$ coincides with the multiple of identity $(-1)^{g_2}$.
It follows that
\[ \chi_{V_p}(g_1,g_2)= (-1)^{g_2}\ \chi_{V_p} \left(g_1 g_2^{-1}, \operatorname{id} \right).\]
By linearity, it follows that for the minimal central projections
$p_\lambda,p_\mu\in Z\C[\Sym{p}]$
which correspond to the Young diagrams $\lambda,\mu$
we have
\[ \chi_{V_p}(p_\lambda\otimes p_\mu) =\sum_{g_1,g_2\in\Sym{p}} \chi_{V_p}(g_1,g_2) \ p_1(g_1)\ p_2(g_2) =
\chi_{V_p}(p_{\lambda} p_{\mu'} \otimes 1).
\]
The left-hand side is equal to the probability of sampling the pair $(\lambda,\mu)$;
the right hand side vanishes unless $\lambda=\mu'$ which concludes the proof.

\subsection{Stanley character formula}
\begin{remark}
It is easy to use the ideas presented in the above proof
to find a new elementary proof of Stanley's formula 
\cite[Theorem 1]{Stanley2001} for
the character of the symmetric group $\Sym{mn}$ 
which corresponds to the rectangular diagram $n^m$;
for other proofs see also \cite{Feray2010,Feray2011}.
More specifically, one should calculate
the character $\chi_{V_p}$ directly by calculating
the trace \eqref{eq:character1} in the standard basis
of the tensor power $(\C^m\otimes \C^n)^{\otimes p}$
and apply \cref{coro:character-restriction}.
\end{remark}

\section{Outlook}

We have to admit that in \cref{sec:problem} we quoted only \emph{a part}
of the original question of Landsberg; in particular we have skipped the 
following more specific passage.

\begin{quotation}
\emph{%
[\dots] 
I am most interested in the case where $p$ is near $\frac{mn}{2}$. Is there a slowly growing
function $f(n)$ such that partitions with fewer than $f(n)$ steps contribute negligibly?
If so, can the fastest growing such $f$ be determined? 
}

\hfill Joseph M.~Landsberg \cite{Landsberg2012}
\end{quotation}

It is not clear if the ideas presented in this note are sufficient to tackle this
more specific problem. 
For more on this topic see the work of Sevak Mkrtchyan \cite{Mkrtchy2017}.

\section*{Acknowledgments}
Research of PŚ was supported by \emph{Narodowe Centrum Nauki}, grant number 2014/15/B/ST1/00064. Research of GP is partially supported by the NSF. 
We thank Paul Wedrich for pointing out the reference \cite{Howe}.
We thank Vadim Gorin for an interesting discussion.
\cref{fig:romik} has been provided by Dan Romik.

\bibliography{biblio}
\bibliographystyle{alpha}

\end{document}